\newtheorem{thm}{Theorem}
\newtheorem{prop}{Proposition}[section]
\newtheorem{lem}[prop]{Lemma}
\theoremstyle{remark}
\numberwithin{equation}{section}
\newcommand{\cx}{{\mathbb{C}}}
\newcommand{\D}{\mathbb D}
\newcommand{\C}{\mathbb C}
\newcommand{\B}{\mathbb B}
\renewcommand{\P}{\mathbb P}
\newcommand{\N}{\mathbb{N}}
\newcommand{\wt}{\widetilde}
 \DeclareMathOperator{\id}{id}
\title[Symmetric powers of balls]{Proper holomorphic self-maps of symmetric powers of balls}
\author{Debraj Chakrabarti}
\address{Department of Mathematics, Central Michigan University, Mt. Pleasant,  MI 48859,  USA}
\email{chakr2d@cmich.edu}
\author{Christopher Grow}
\address{Department of Mathematics, Central Michigan University, Mt. Pleasant,  MI 48859,  USA }
\email{grow1cm@cmich.edu}
\thanks{Debraj Chakrabarti was partially supported by a grant from the NSF (\#1600371), a collaboration grant from the
Simons Foundation (\# 316632), and  an 
Early Career internal grant from Central Michigan University. Christopher Grow was partially supported by a Research Assistantship from the Central Michigan University mathematics department.}
\begin{document}
\begin{abstract} We show that each proper holomorphic self map of a symmetric power of the  unit ball is an automorphism 
naturally induced by an automorphism of the unit ball, provided the ball is of dimension at least two.
\end{abstract}
\maketitle
 \section{Introduction}
Let $\D^m$ denote the $m$-dimensional polydisc in $\C^m$, and for $1\leq k \leq m$, denote by $\sigma_k$ the $k$-th elementary symmetric polynomial in $m$ variables.  The subset of $\cx^m$ given by
\begin{equation}\label{eq-sigmamd}
\Sigma^m \D =\left \{(\sigma_1(z),\sigma_2(z),\dots,\sigma_m(z))\in \cx^m| 
z\in \D^m\right\}
\end{equation}
is known as the {\em symmetrized polydisc} of $m$-dimensions. 
It turns out that $\Sigma^m \D$ is a pseudoconvex domain $\cx^n$ with remarkable function theoretic properties, 
and applications to engineering (see \cite{ agleryoung1} and the work inspired by it.) 
Of particular interest are the symmetries and mapping properties of these domains. In \cite{jarpfl}, Jarnicki and Pflug determined the biholomorphic automorphisms of $\Sigma^m\D$. More generally, we have the following result of Edigarian and Zwonek on proper self-maps of $\Sigma^m \D$:
\begin{thm}[See \cite{edigarian1, edigarian2}]
\label{thm-ez}
Let $f:\Sigma^m\D\to\Sigma^m\D$ be a proper holomorphic map, and let $\sigma=(\sigma_1,\dots,\sigma_m):\C^m\to\C^m$ be as in \eqref{eq-sigmamd}. Then, there exists a proper holomorphic map $B:\D\to\D$ such that 
\begin{equation*}
    f(\sigma(z^1,\dots,z^m)=\sigma(B(z^1),\dots,B(z^m)).
\end{equation*}
\end{thm}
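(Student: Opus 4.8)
The plan is to lift $f$ through the symmetrization map to a proper holomorphic self-map of the polydisc, invoke the classical description of the latter, and then exploit the $S_m$-symmetry to identify the Blaschke factor. Write $\sigma\colon\D^m\to\Sigma^m\D$ for the restriction of the map in \eqref{eq-sigmamd}: it is a proper surjective holomorphic map, it realizes $\Sigma^m\D$ as the quotient $\D^m/S_m$ by the permutation action of the symmetric group, and its branch locus is the discriminant hypersurface $\Delta\subset\Sigma^m\D$ of those $s$ for which $p_s(t)=t^m-s_1t^{m-1}+\cdots+(-1)^m s_m$ has a repeated root. Two algebraic facts about $\sigma$ will be used: the Jacobian determinant of $\sigma$ equals, up to sign, the Vandermonde polynomial $\prod_{i<j}(z^i-z^j)$, and the discriminant $D$ of $p_s$ (a polynomial in $s$ with $\{D=0\}=\Delta$) satisfies $D\circ\sigma=\prod_{i<j}(z^i-z^j)^2$; hence $\sigma^*\Delta=2\Delta_0$ as divisors on $\D^m$, where $\Delta_0=\bigcup_{i<j}\{z^i=z^j\}$ is the big diagonal. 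Finally, both $\D^m$ and $\Sigma^m\D$ are simply connected, and over $\Sigma^m\D\setminus\Delta$ the map $\sigma$ restricts to an unramified Galois covering of degree $m!$ with deck group $S_m$, whose monodromy is a surjection $\pi_1(\Sigma^m\D\setminus\Delta)\to S_m$.

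First I would produce a holomorphic lift $\wt f\colon\D^m\to\D^m$ with $\sigma\circ\wt f=f\circ\sigma$. Set $F=f\circ\sigma$, a proper holomorphic map $\D^m\to\Sigma^m\D$; since $f$ is surjective, $F^{-1}(\Delta)=\sigma^{-1}(f^{-1}(\Delta))$ is a proper analytic hypersurface and $\Omega:=\D^m\setminus F^{-1}(\Delta)$ is connected, with $F(\Omega)\subset\Sigma^m\D\setminus\Delta$. Because $\D^m$ is simply connected, $\pi_1(\Omega)$ is generated by meridian loops around the smooth points of the irreducible components of $F^{-1}(\Delta)$, and the image in $S_m$ of such a meridian around a component $C$ — under $F$ followed by the monodromy of $\sigma$ — is conjugate to the $\mu_C$-th power of a transposition, where $\mu_C=\operatorname{ord}_C(D\circ f\circ\sigma)$ is the multiplicity of $C$ in the divisor $F^*\Delta$. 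If every $\mu_C$ is even, this monodromy is trivial, $F|_\Omega$ lifts through $\sigma$, the lift is bounded hence extends holomorphically across $F^{-1}(\Delta)$ by Riemann's extension theorem (and the maximum principle keeps its values in $\D^m$), yielding $\wt f$, which is automatically proper because $F$ is proper and $\sigma$ has finite fibres. The evenness of the $\mu_C$ is exactly the statement that $f$ respects the branch structure of $\sigma$ along $\Delta$, i.e. that $F^*\Delta=\sigma^*(f^*\Delta)$ is an even divisor, equivalently that $(D\circ f)\circ\sigma$ is a perfect square in $\mathcal O(\D^m)$, equivalently that $f^*\Delta=\varepsilon\Delta+2E$ with $\varepsilon\in\{0,1\}$ and $E$ effective. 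Note that evenness is automatic once $f$ is known to have the desired form, since then $(D\circ f)\circ\sigma=\prod_{i<j}(B(z^i)-B(z^j))^2$; so the real task is to establish it directly for an arbitrary proper $f$, and I expect this to be the principal difficulty of the whole argument. It cannot be read off the local normal form of $f$ along a component of $f^{-1}(\Delta)$, so a global input is needed, using the specific geometry of $\Sigma^m\D$ — for instance the behaviour of proper maps on the complex geodesics of $\Sigma^m\D$ and on its distinguished boundary $\Sigma^m\mathbb T$, or the Agler--Young type extremal functions $\Phi_\omega\colon\Sigma^m\D\to\D$ ($\omega\in\overline\D$) that realize the Carath\'eodory distance, through which one controls how $f$ pulls back $\Delta$.

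Granting the lift, I would conclude as follows. By Rischel's theorem on proper holomorphic self-maps of the polydisc, there are a permutation $\tau\in S_m$ and non-constant finite Blaschke products $B_1,\dots,B_m$ with $\wt f(z)=\bigl(B_1(z^{\tau(1)}),\dots,B_m(z^{\tau(m)})\bigr)$. Since $\sigma$ is $S_m$-invariant, $\sigma\bigl(\wt f(g\cdot z)\bigr)=f\bigl(\sigma(g\cdot z)\bigr)=f(\sigma(z))=\sigma\bigl(\wt f(z)\bigr)$ for every $g\in S_m$; as $\D^m$ is irreducible and $\wt f$ is surjective, this forces $\wt f\circ g=\eta(g)\circ\wt f$ identically, for a well-defined homomorphism $\eta\colon S_m\to S_m$. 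Comparing the Rischel normal forms of $\wt f\circ g$ and $\eta(g)\circ\wt f$ coordinate by coordinate shows that $\eta$ is the inner automorphism $g\mapsto\tau^{-1}g\tau$, in particular a bijection, and that $B_i=B_{\eta(g)^{-1}(i)}$ for all $i$ and all $g$; since $\eta$ is onto and $S_m$ is transitive on $\{1,\dots,m\}$, the latter gives $B_i=B_j$ for all $i,j$. Writing $B$ for this common finite Blaschke product — a proper holomorphic self-map of $\D$ — and using once more that $\sigma$ is symmetric, we obtain
$$f\bigl(\sigma(z^1,\dots,z^m)\bigr)=\sigma\bigl(\wt f(z)\bigr)=\sigma\bigl(B(z^1),\dots,B(z^m)\bigr),$$
which is the assertion; the converse, that every such $B$ gives rise this way to a proper holomorphic self-map of $\Sigma^m\D$, is immediate from properness of $B$ and of $\sigma$.
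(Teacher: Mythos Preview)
The paper does not prove this theorem: it is quoted from \cite{edigarian1, edigarian2} as background, and the paper's own contribution is the analogue for $\B_s$ with $s\geq 2$ (Theorem~\ref{thm-main}). The paper's lifting argument (Theorem~\ref{carttosym}) runs parallel to your first step, but it \emph{requires} $s\geq 2$: by Lemma~\ref{codimension} the big diagonal $A\subset(\B_s)^m$ and its image $\pi(A)$ have codimension $s$, so for $s\geq 2$ Proposition~\ref{simplyconnected2} makes the relevant complements simply connected and the lift through $\pi$ exists for free. For $s=1$ the codimension is $1$, those complements have nontrivial $\pi_1$, and the paper's mechanism says nothing about Theorem~\ref{thm-ez}.

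Your proposal correctly isolates the obstruction in the $s=1$ case: one must control the monodromy of $F=f\circ\sigma$ around the components of $F^{-1}(\Delta)$, and this monodromy is trivial iff every multiplicity $\mu_C$ in $F^*\Delta$ is even. But you do not prove the evenness. You yourself call it ``the principal difficulty of the whole argument,'' observe that no local computation can settle it, and only gesture at possible global inputs (the distinguished boundary, Carath\'eodory extremals $\Phi_\omega$) without carrying anything through. This is a genuine gap, not missing routine detail, and it is exactly where the substance of \cite{edigarian1, edigarian2} lies: those papers establish, by arguments specific to the function theory of $\Sigma^m\D$, that $f$ respects the discriminant locus in the way needed for the lift. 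Your outline supplies no substitute for that analysis. Once the lift $\wt f$ is granted, your second paragraph is correct and matches how the paper concludes Theorem~\ref{thm-main}: the Remmert--Stein/Rischel product structure on proper self-maps of the polydisc, followed by $S_m$-equivariance to force all Blaschke factors to coincide.
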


Recall that a proper holomorphic map $B:\D\to \D$ is represented by a finite Blaschke product, so the above result gives a 
complete characterization of proper holomorphic self-maps of $\Sigma^m \D$, so that each such map is induced by a proper holomorphic self-map of the disc. 

In this note we prove an analogous result for proper self-maps of complex analytic spaces analogous to the symmetrized 
polydisc, where the disc is replaced by a higher dimensional ball. To define these spaces, let $\B_s\subset \cx^s$ denote the unit ball in $\cx^s$, and for some positive integer $m$, let $(\B_s)^m_{\rm Sym}$ denote the $m$-fold {\em symmetric power} of  $\B_s$, i.e., the collection of all {\em unordered} $m$-tuples $\langle z^1,z^2,\dots, z^m\rangle$, where each $z^j\in \B_s$.  Note that  the construction of the symmetric power is functorial: given any map $g:\B_s\to \B_s$, there is a naturally defined map $g^m_{\rm Sym}:(\B_s)^m_{\rm Sym}\to (\B_s)^m_{\rm Sym}$ given by
\[ g^m_{\rm Sym} (\langle z^1, z^2,\dots, z^m\rangle)= \langle g(z^1), g(z^2),\dots, f(z^m)\rangle,\]
which is easily seen to be well-defined.  See below in Section~\ref{sec-sympowers} for a discussion of this notion, and further details. Then, 
$(\B_s)^m_{\rm Sym}$ is a complex analytic space, and $\Sigma^m \D$ is biholomorphic to $\D^m_{\rm Sym}$. The main result of this paper is:
\begin{thm}
\label{thm-main}
Let $s\geq 2$, $m\geq2$, and let $f:(\B_s)^m_{\rm{Sym}}\to(\B_s)^m_{\rm{Sym}}$ be a proper holomorphic map. Then, there exists a holomorphic automorphism $g:\B_s\to \B_s$ such that 
\[ f= g^m_{\rm Sym},\]
that is $f$ is the $m$-th symmetric power of $g$. 
\end{thm}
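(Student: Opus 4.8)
The plan is to lift $f$ to a proper holomorphic self-map of the ordinary Cartesian power $X=(\B_s)^m$, to prove that every such self-map is, up to a permutation of the factors, a product of automorphisms of $\B_s$, and then to push the conclusion back through the quotient map using an equivariance relation. Throughout, let $\pi\colon X\to(\B_s)^m_{\rm Sym}$ be the quotient by the coordinate-permuting action of $S_m$, let $\Delta\subset X$ be the big diagonal (at least two entries equal), and $\Sigma=\pi(\Delta)$. Since $X$ is a product of balls it is Stein, hence so is the finite quotient $(\B_s)^m_{\rm Sym}$; in particular neither space contains a positive-dimensional compact analytic subset, so $f$ has discrete fibres, i.e. $f$ is finite, and, being proper, surjective. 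The map $\pi$ is a finite branched cover restricting to an honest $m!$-sheeted covering $X\setminus\Delta\to(\B_s)^m_{\rm Sym}\setminus\Sigma$. The essential numerics: since $s\ge2$, $\Delta$ has complex codimension $s\ge2$ in $X$, hence $\Sigma$ has codimension $\ge2$ in $(\B_s)^m_{\rm Sym}$, and since $f$ is finite, $f^{-1}(\Sigma)$ also has codimension $\ge2$.

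\emph{Lifting.} Put $U=(\B_s)^m_{\rm Sym}\setminus\bigl(\Sigma\cup f^{-1}(\Sigma)\bigr)$. Then $\pi^{-1}(U)=X\setminus\bigl(\Delta\cup\pi^{-1}(f^{-1}(\Sigma))\bigr)$ is the complement in the simply connected manifold $X$ of an analytic set of codimension $\ge2$, hence is itself simply connected. Therefore $f\circ\pi\colon\pi^{-1}(U)\to(\B_s)^m_{\rm Sym}\setminus\Sigma$ lifts through the covering $\pi$ to a holomorphic map $\widetilde F\colon\pi^{-1}(U)\to X$ with $\pi\circ\widetilde F=f\circ\pi$. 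As $\widetilde F$ is bounded, the Riemann removable singularity theorem extends it to a holomorphic $F\colon X\to\overline X$; applying the maximum principle to $\norm{\cdot}^2$ on each of the $m$ ball-valued components of $F$ forces $F(X)\subset X$, and then $\pi\circ F=f\circ\pi$ holds throughout $X$ by continuity. Finally $\pi\circ F=f\circ\pi$ is proper and $\pi$ is proper, so $F$ is proper.

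\emph{Classifying the lift.} The next step is to show that a proper holomorphic self-map $F$ of $X=(\B_s)^m$, $s\ge2$, necessarily has the form $F(z)=(\phi_1(z^{\tau(1)}),\dots,\phi_m(z^{\tau(m)}))$ with $\tau\in S_m$ and each $\phi_i\in\Aut(\B_s)$. One route --- which I expect to be the technical heart of the paper --- uses that a product of balls satisfies Condition~R, so that $F$ extends smoothly across each smooth boundary face $\{\norm{z^j}=1,\ \norm{z^k}<1\ (k\ne j)\}$ as a CR map into $\partial X$; the Levi-null directions of such a face integrate to the flat leaves $\{z^j=a\}$, a CR map sends Levi-null directions to Levi-null directions, and a comparison of boundary values then shows that one fixed component of $F$ depends on $z^j$ alone. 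Surjectivity of $F$ prevents two components from depending on the same variable, so the dependence is governed by a permutation, yielding the product form, and each factor $\phi_i\colon\B_s\to\B_s$ is proper, hence an automorphism by Alexander's theorem. (Alternatively one may quote the classification of proper holomorphic maps between products of balls, or between reducible bounded symmetric domains.)

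\emph{Descent.} For each $\gamma\in S_m$, $\pi(F(\gamma\cdot z))=f(\pi(z))=\pi(F(z))$, so $F(\gamma\cdot z)$ and $F(z)$ lie in a common $S_m$-orbit; on the connected set $X\setminus F^{-1}(\Delta)$, where $F(z)$ has trivial stabiliser, this determines a group homomorphism $\rho\colon S_m\to S_m$ with $F(\gamma\cdot z)=\rho(\gamma)\cdot F(z)$ for all $z$. Substituting the product form of $F$ forces $\rho(\gamma)=\tau^{-1}\gamma\tau$ and $\phi_i=\phi_{\rho(\gamma)^{-1}(i)}$ for all $i$ and $\gamma$; since $\tau^{-1}\gamma\tau$ ranges over all of $S_m$, all the $\phi_i$ coincide with a single $g\in\Aut(\B_s)$. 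Hence $F(z)=(g(z^{\tau(1)}),\dots,g(z^{\tau(m)}))$, so $f(\pi(z))=\pi(F(z))=\langle g(z^1),\dots,g(z^m)\rangle=g^m_{\rm Sym}(\pi(z))$, and $f=g^m_{\rm Sym}$ by surjectivity of $\pi$. The lifting step is soft, succeeding precisely because $s\ge2$ places $\Delta$ in codimension $\ge2$; the principal obstacle is the classification of proper self-maps of $(\B_s)^m$ --- especially the boundary regularity on the merely weakly pseudoconvex faces --- which is also where $s\ge2$ is again indispensable, through Alexander's theorem.
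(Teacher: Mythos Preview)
Your proof follows the same three-step architecture as the paper: lift $f\circ\pi$ through the covering $\pi|_{X\setminus\Delta}$ using that the big diagonal has codimension $s\ge 2$, classify the resulting proper holomorphic self-map of $(\B_s)^m$ as a permuted product of proper self-maps of $\B_s$, and use $S_m$-equivariance to force all factors to coincide with a single $g$, which Alexander's theorem makes an automorphism. The one substantive difference is your handling of the classification step: you anticipate it to be the technical heart and sketch an argument via Condition~R, smooth extension to boundary faces, and CR geometry of Levi-null foliations. In the paper this step is disposed of in one sentence by citing the classical Remmert--Stein normal-families argument (Narasimhan, \emph{Several Complex Variables}, p.~76), which already gives that a proper holomorphic self-map of a product of bounded domains is, after a permutation of factors, a product of proper self-maps of the factors---no boundary regularity is invoked. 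Thus the step you flag as the principal obstacle is in fact handled by citation, and the paper's own work is concentrated in the lifting argument, which it states as a separate theorem for arbitrary proper holomorphic maps $(\B_s)^m\to(\B_s)^m_{\rm Sym}$. Your descent step via the homomorphism $\rho$ is if anything more carefully articulated than the paper's version of the same idea.
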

It follows in particular that each proper self-map of $(\B_s)^m_{\rm{Sym}}$ is in fact an automorphism. Recall also (see \cite[p. 25]{rudin} that an automorphism of $\B_s$ is of the form \begin{equation}
\label{autB}
    \varphi_a(z)=\frac{a-P_a z-s_a Q_a z}{1-\langle z,a\rangle},
\end{equation}
where $a\in\B_n$, $P_a$ is the orthogonal projection from $\C^n$ onto the one dimensional complex linear subspace spanned by $a$, $Q_a=\id-P_a$ is the orthogonal projection from $\C^n$ onto the orthogonal complement of the one dimensional complex linear subspace spanned by $a$, and $s_a=(1-|a|^2)^\frac{1}{2}$. 

Note also that the domain $\B_s$ may be replaced in Theorem~\ref{thm-main} by any strongly pseudoconvex domain, without any change in the proof.
We prefer to state it in this special case for simplicity.

\section{Symmetric Powers} \label{sec-sympowers}
\subsection{Complex Symmetric powers} Recall that, informally, 
a {\em complex analytic space}  is made of {\em local analytic subsets} of $\cx^n$ glued together analytically, just as  a complex manifold of dimension $s$ is made of open sets of $\cx^s$ analytically glued together (see \cite{chirka, whitney, gunros} for more information).  Recall also that an analytic subset of $\cx^n$ is given near each of point of $\cx^n$ by the vanishing of a family of analytic functions, and a local analytic subset of $\cx^n$ is an open subset of an analytic subset of $\cx^n$.

Let $X$ be a complex manifold, or more generally, a complex analytic space. let $X^m$ denote the $m$-th Cartesian power of $X$, which is by definition the collection of
ordered tuples
\[ \{ (x^1,\dots, x^m), x^j \in X, j=1,\dots,m\}.\]
$X^m$ is then a complex manifold in an obvious way. 
The symmetric group $S_m$ of bijective mappings of the set $\{1,\dots, m\}$ acts on $X^m$ in as biholomorphic automorphisms: for $\sigma\in S_m$, and $x= (x^1,\dots, x^m)\in X^m$ we set
\[ \sigma\cdot x= (x^{\sigma(1)}, \dots, x^{\sigma(m)}).\]
The $m$-th {\em symmetric Power} of $X$, denoted by $X^m_{\rm Sym}$ is the quotient of  $X^m$ under the action of $S_m$ defined above, i.e, points of $X^m_{\rm Sym}$ are orbits of the action of $S_m$ on $X^m$. We denote by
\[ \pi: X^m \to X^m_{\rm Sym}\]
the natural quotient map.  It follows from the general theory of complex analytic spaces that $X^m_{\rm Sym}$ has a canonical structure of an analytic space, i.e., the quotient analytic space of $X^m$ under the action of $S_m$ as biholomorphic automorphisms (see \cite{cartan1, cartan2} and \cite{gunros}).  When $X^m_{\rm Sym}$ is given
this complex structure, the map $\pi$ becomes a proper holomorphic map. 

In our application, we are interested in the case when $X=\B_s$, the unit ball in $\cx^s$. The symmetric power $(\B_s)^m_{\rm Sym}$ is then biholomorphic to a local analytic set, in fact to an open subset of a certain affine algebraic 
variety in $\cx^N$ for some large $N$ depending on $m$ and $s$.  Though logically not needed for the proof of Theorem~\ref{thm-main}, we give a short account of this construction in order to explain the relation of Theorem~\ref{thm-main} with Theorem~\ref{thm-ez}, as well as to emphasize the elementary nature of the constructions.

\subsection{Embedding of symmetric powers of projective spaces}
Recall that the $m$-fold \emph{symmetric tensor product} of a (complex) vector space $V$, denoted $V^{\odot m}$ is defined as $V^{\odot m}=V^{\otimes m}\slash\sim$, where $V^{\otimes m}=V\otimes V\otimes\dots\otimes V$ is the $m$-fold tensor product of $V$ with itself, and the equivalence relation $\sim$ is defined by:
\begin{equation}
\label{symtensorrel}
    v^{\sigma(1)}\otimes v^{\sigma(2)}\otimes\dots\otimes v^{\sigma(m)}\sim v^1\otimes v^2\otimes\dots\otimes v^m
\end{equation}
for all $v^1,v^2,\dots,v^m\in V$ and all $\sigma\in S_m$. We denote the equivalence class of  $v^1\otimes v^2\otimes\dots\otimes v^m$ under this equivalence relation (which is an element of 
$ V^{\odot m}$) by $ v^1\odot v^2\odot\dots\odot v^m.$
Just as the tensor product of vector spaces is itself a vector space, the symmetric tensor product $V^{\odot m}$ of a vector space is again a vector space, with a natural linear structure as a quotient vector space of $V^{\otimes m}$. Suppose $V$ is finite-dimensional of dimension $s+1$, and let $\{e_0,\dots,e_s\}$ be a basis of $V$. Let $\mu=(\mu_0,\mu_2,\dots,\mu_s)\in\N^{s+1}$ be a multi-index with $|\mu|=\sum_{j=0}^s \mu_j =m$, and let $\mathbf{e}_\mu$ denote the element of $V^{\odot m}$ given by
\begin{equation}\label{eq-emu}
    \mathbf{e}_\mu=e_{i_1}\odot e_{i_2}\odot\dots\odot e_{i_m}
\end{equation}
where exactly $\mu_j$ of the $e_{i_k}$ are equal to $e_j$. It is not difficult to see that  $\{\mathbf{e}_\mu\}_{|\mu|=m}$ gives a basis for $V^{\odot m}$. Therefore the dimension of $V^{\odot m}$ is the number of solutions of $\sum_{j=0}^s \mu_j=m$, i.e.
\begin{equation}\label{dimension}
\dim V^{\odot m}=\binom{m+s}{m}.
\end{equation}
Given a vector space $V$, there is a natural mapping from the symmetric power, $V^m_{\rm Sym}$ (which is just a set), to the symmetric tensor product $V^{\odot m}$ (which is a vector space), given by
\begin{equation}
\label{symtotenssym}
    \langle v^1,v^2,\dots,v^m\rangle\mapsto v^1\odot v^2\odot\dots\odot v^m,
\end{equation}
which is easily seen to be well-defined.

Denote by $\P(V)$ the  {projectivization} of the vector space $V$, and for $v\in V$, denote by 
$ [v]$  its equivalence class in the projectivization  $ \P(V)$. There is a natural map 

\[ \psi:(\P(V))^m_{\rm Sym}\to \P(V^{\odot m})\]
 induced by the map \eqref{symtotenssym}  given by
\begin{equation}
\label{segre-whitney}
    \psi(\langle[v^1],[v^2],\dots,[v^m]\rangle)=  \left[v^1\odot v^2\odot\dots\odot v^m\right]\in \P(V^{\odot m}),
\end{equation}
with $v^j\in V$.
We call this the \emph{Segre-Whitney} map (see  Appendix  V of \cite{whitney}). It is easily seen to be well-defined and injective, and  is a symmetric version of the well-known classical {\em Segre embedding} of the product of two or more projective spaces as a projective variety in a higher dimensional projective space.

Thanks to classical results in complex algebraic geometry, the image $\psi\left( \P(V))^m_{\rm Sym}\right)$ is a projective
algebraic variety in the projective space $ \P(V^{\odot m})$, which, if $V=\cx^{s+1}$ is of dimension 
\begin{equation}
\label{Nms}
    N(m,s)= \binom{m+s}{m}-1= \dim_\C\left((\C^{s+1})^{\odot m}\right)-1.
\end{equation}
\subsection{Embedding of $(\B_s)^m_{\rm Sym}$} 
Let $E$ be a subset of $\C\P^s=\P(\cx^{s+1})$, and let $i:E\to \C\P^s$ be the inclusion map. Then we have a natural 
inclusion 
\[ i^m_{\rm Sym}: E^m_{\rm Sym} \to (\C\P^s)^m_{\rm Sym},\]
so that we can think of $E^m_{\rm Sym}$ as a subset of $(\C\P^s)^m_{\rm Sym}$. Composing with the Segre-Whitney embedding $\psi$ of $(\C\P^s)^m_{\rm Sym}$ in $\C\P^{N(m,s)}$, where $N(m,s)$ is defined as in \eqref{Nms}, we obtain an embedding
\[ \psi\circ i^m_{\rm Sym}: E^m_{\rm Sym} \to \C\P^{N(m,s)}\]
which we will again call the {\em Segre-Whitney embedding}. In this way we can think of symmetric powers as sitting in some projective space. We will denote this embedded version of the $m$-th symmetric power of $E$ by $\Sigma^m E$, i.e.,
\begin{equation}\label{sigmamu}
    \Sigma^mE = \psi\circ i^m_{\rm Sym}({E^m_{\rm Sym}}) \subset \C\P^{N(m,s)}.
\end{equation}
Two special cases of this construction are relevant here. The first is when $E$ is an affine piece of $\C\P^s$, so that $E$ can 
be identified with $\cx^{s}$.  Then, explicitly working through the computations, one can verify that $\Sigma^m E$ is an affine algebraic variety in an affine piece of $\C\P^{N(m,s)}$. So we can think of $\Sigma^m \C^s$ as an affine algebraic variety in $\C^{N(m,s)}$. For details of this construction, see \cite[Appendix V]{whitney}. 

Now, if $E=\B_s$ is the unit ball in an affine piece of $\C\P^s$, then it is easy to see that $\Sigma^m E= \Sigma^m \B_s$ 
is an open subset of the affine algebraic variety $\Sigma^m \C^s$. In this way,  $(\B_s)^m_{\rm Sym}$ is realized as a 
local analytic set, i.e., an open set of an analytic subset  of $\C^{N(m,a)}$ (actually an open set of an affine algebraic variety).

\subsection{The case $s=1$} When $s=1$, we have $N(m,s)=m$, and it is not difficult to see that the Segre-Whitney map
$ \cx^m_{\rm Sym} \to  \cx^m$
is actually a biholomorphism, given by
\[  z \mapsto (\sigma_1(z),\dots, \sigma_m(z)),\]
where $\sigma_k(\langle z_1,\dots, z_m\rangle )$ is the $k$-th elementary symmetric polynomial in the variables $(z_1,\dots, z_m)$.  Then the image of  $\D^m_{\rm Sym}$ is a  pseudoconvex domain $\Sigma^m \D$ in $\cx^m$, called the 
{\em symmetrized polydisc}. See \cite{chakgorai} for more details.  Consequently, the symmetric power $\D^m_{\rm Sym}$ is biholomorphically identified with the domain  $\Sigma^m \D$ in $\cx^m$, which shows that Theorem~\ref{thm-main} is indeed an extension of Theorem~\ref{thm-ez}. 

\section{Proper mappings of Cartesian to Symmetric powers}
The first step in the proof of Theorem~\ref{thm-main} is the following result, which is interesting in its own right:

\begin{thm}
\label{carttosym}
Let $s\geq 2$, $m\geq 2$, and let $f:(\B_s)^m\to(\B_s)^m_{\rm{Sym}}$ be a proper holomorphic map. Then, there exists a proper holomorphic map $\widetilde{f}:(\B_s)^m\to (\B_s)^m$ such that $f=\pi\circ \widetilde{f}$.
\end{thm}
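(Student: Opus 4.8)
The plan is to lift $f$ through the quotient map $\pi$ over the open set where $\pi$ is unramified, and then to extend the lift holomorphically across the remaining thin set. Write $\widehat\Delta\subset(\B_s)^m$ for the big diagonal, the analytic subset consisting of those $m$-tuples in which at least two entries coincide, and put $\Delta=\pi(\widehat\Delta)$ and $\Omega=(\B_s)^m_{\rm Sym}\setminus\Delta$. The group $S_m$ acts freely on $(\B_s)^m\setminus\widehat\Delta$, so $\Omega$ is a complex manifold, identified with the orbit space $\big((\B_s)^m\setminus\widehat\Delta\big)/S_m$, and $\pi\colon(\B_s)^m\setminus\widehat\Delta\to\Omega$ is an unramified covering of degree $m!$; moreover $\Delta$ is an analytic subset of $(\B_s)^m_{\rm Sym}$ — the image of the analytic set $\widehat\Delta$ under the proper map $\pi$ — with $\dim\Delta=\dim\widehat\Delta=(m-1)s$. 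Next, $f$ is finite, since every fibre $f^{-1}(y)$ is a compact analytic subset of the bounded domain $(\B_s)^m\subset\C^{ms}$, hence is finite. Therefore $A:=f^{-1}(\Delta)$ is an analytic subset of $(\B_s)^m$ which $f$ maps finitely onto a subset of $\Delta$, so $\dim A\le(m-1)s$ and $\codim_{(\B_s)^m}A\ge s\ge2$.

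Set $U=(\B_s)^m\setminus A$. Since $(\B_s)^m$ is contractible and $A$ has codimension at least two, $U$ is connected and simply connected, and $f(U)\subseteq\Omega$. Pulling the covering $\pi\colon(\B_s)^m\setminus\widehat\Delta\to\Omega$ back along $f|_U\colon U\to\Omega$ gives a covering of the simply connected space $U$, hence a trivial one; selecting a sheet and composing with the projection to $(\B_s)^m$ produces a map $\widetilde f_U\colon U\to(\B_s)^m$ with $\pi\circ\widetilde f_U=f|_U$. This map is holomorphic because $\pi$ is a local biholomorphism on $(\B_s)^m\setminus\widehat\Delta$, so locally $\widetilde f_U$ is the composite of $f$ with a local holomorphic inverse of $\pi$.

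It remains to extend $\widetilde f_U$ across $A$ and to check properness. Since $\widetilde f_U$ takes values in the bounded set $(\B_s)^m$, the Riemann removable singularity theorem furnishes a holomorphic extension $\widetilde f\colon(\B_s)^m\to\overline{(\B_s)^m}\subset\C^{ms}$. To see that $\widetilde f$ maps into $(\B_s)^m$, let $\overline\pi\colon\overline{(\B_s)^m}\to\overline{(\B_s)^m_{\rm Sym}}$ be the continuous extension of $\pi$ to the closures; a boundary point of $(\B_s)^m$ has an entry on $\partial\B_s$, so $\overline\pi$ carries $\partial(\B_s)^m$ into $\partial\big((\B_s)^m_{\rm Sym}\big)$. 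If $\widetilde f(p)\in\partial(\B_s)^m$ for some $p\in(\B_s)^m$, pick $p_k\in U$ with $p_k\to p$; then $f(p_k)=\overline\pi(\widetilde f(p_k))\to\overline\pi(\widetilde f(p))\in\partial\big((\B_s)^m_{\rm Sym}\big)$, contradicting $f(p_k)\to f(p)\in(\B_s)^m_{\rm Sym}$. Hence $\widetilde f\colon(\B_s)^m\to(\B_s)^m$ is holomorphic, and $\pi\circ\widetilde f=f$ by continuity on the dense set $U$. Finally $\widetilde f$ is proper: for compact $L\subseteq(\B_s)^m$ the set $\widetilde f^{-1}(L)$ is closed and contained in $f^{-1}(\pi(L))$, which is compact because $f$ is proper and $\pi(L)$ is compact.

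The crux of the argument, and the only point where the hypothesis $s\ge2$ enters, is the codimension bound $\codim_{(\B_s)^m}A\ge2$: this is what makes $U$ simply connected, so that the global lift $\widetilde f_U$ exists, and simultaneously what puts $A$ within reach of the Riemann extension theorem. When $s=1$, $\Delta$ is a hypersurface, $U$ is no longer simply connected, and $f$ need not lift — in agreement with Theorem~\ref{thm-ez}, where the proper self-maps of $\Sigma^m\D$ come from arbitrary finite Blaschke products rather than just from automorphisms.
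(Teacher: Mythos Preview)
Your proof is correct and follows essentially the same route as the paper: remove the branch locus $\Delta$ and its preimage, lift over the resulting simply connected complement via covering-space theory, extend by Riemann's theorem, and verify properness. Your treatment is in fact slightly more careful than the paper's in one place --- you explicitly check that the Riemann extension of $\widetilde f_U$ lands in the open set $(\B_s)^m$ rather than merely in its closure --- and your use of the finiteness of $f$ to bound $\dim f^{-1}(\Delta)$ is a clean alternative to the paper's surjectivity-plus-Proposition~\ref{analyticdimension} argument.
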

In other words, the map $f$ can be {\em lifted} to a proper holomorphic map
$\widetilde{f}$ such that the following diagram commutes:

\begin{center}
\begin{tikzpicture}[scale=1]
    \node at (0,0) (left) {$(\B_s)^m$};
    \node at (3,0) (right) {$(\B_s)^m_{\rm{Sym}}$};
    \node at (3,2) (top) {$(\B_s)^m$};
    \draw[->] (left) -- (right);
    \draw[->] (top) -- (right);
    \draw[->] (left) -- (top);
    \node[label={south:$f$}] at (1.5,0) {};
    \node[label={east:$\pi$}] at (3,1) {};
    \node[label={[label distance=-5]north west:$\wt{f}$}] at (1.5,1) {};
\end{tikzpicture}
\end{center}

Note however, that the map $\pi$ is {\em not} a covering map, so that the classical theory of lifting maps into a covering space is 
not directly applicable. However, as we will see, we can reduce this problem to a problem involving covering maps by removing the ramification locus and the branching locus of the map $\pi$ from $(\B_s)^m$ and $(\B_s)^m_{\rm Sym}$ respectively. 
\subsection{Fundamental group of complements of analytic sets}
 The proof of Theorem~\ref{carttosym} will involve the computation of some fundamental groups, for which we need the following fact. We include a proof for completeness. 
\begin{prop}
\label{simplyconnected2}
Let $M$ be a connected complex manifold without boundary, $A\subset M$ be an analytic subset, $x$ a point in $M\setminus A$, and $i:M\setminus A\to M$ the inclusion map. Then if
    the complex codimension of $A$ is at least 2, then the homomorphism of the fundamental groups
    \begin{equation*}
        i_*:\pi_1(M\setminus A, x)\to\pi_1(M, x)
    \end{equation*}
    is an isomorphism.
\end{prop}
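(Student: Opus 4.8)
The plan is to prove separately that $i_*$ is surjective and that it is injective, in each case by an elementary transversality (general position) argument; the hypothesis enters only through the fact that $A$, having complex codimension at least $2$ in $M$, has real codimension at least $4$.

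The geometric input I would use is that such an $A$ is a locally finite union of locally closed complex submanifolds of $M$, each of complex codimension $\geq 2$, hence of real codimension $\geq 4$: one may take the canonical stratification $A=A_0\sqcup A_1\sqcup\cdots$, where $A_0$ is the regular locus of $A$, $A_1$ the regular locus of $\operatorname{Sing}(A)$, and so on, each $A_j$ being a complex submanifold of $M$ contained in $A$. The key lemma is then: if $P$ is a compact manifold with boundary of dimension $\leq 2$, $K\subset P$ a closed subset, and $F\colon P\to M$ a continuous map with $F(K)\subset M\setminus A$, then $F$ is homotopic rel $K$ to a map $F'$ with $F'(P)\cap A=\emptyset$. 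Indeed, one smooths $F$ rel a neighborhood of $K$ and then perturbs it slightly, fixing a neighborhood of $K$, so that it becomes transverse to every stratum $A_j$; this is possible because $F(P)$ is compact and hence meets only finitely many strata. For the resulting map, transversality to $A_j$ makes its preimage a submanifold of $P$ of codimension $\geq 4$; since $\dim P\leq 2$, this submanifold is empty, so the image avoids $A$.

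Granting the lemma, surjectivity of $i_*$ is the case $P=S^1$, $K=\{x\}$: a based loop in $M$ is homotopic rel $x$ to one lying in $M\setminus A$, so its class is in the image of $i_*$. Injectivity is the case $P=D^2$, $K=\partial D^2$: if a loop $\gamma$ in $M\setminus A$, taken smooth, bounds a disc $H\colon D^2\to M$, then since $H(\partial D^2)=\gamma(S^1)\subset M\setminus A$ the lemma produces a disc for $\gamma$ lying in $M\setminus A$, so $\gamma$ is null-homotopic there, and $[\gamma]=1$ in $\pi_1(M\setminus A,x)$.

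I expect the only genuine obstacle to be justifying transversality with respect to the singular, stratified set $A$: this uses slightly more than bare differential topology, although it is standard, the canonical stratification above being in fact a Whitney stratification. If one prefers to sidestep stratified transversality, the alternative is to induct on $\dim_\C A$: the singular locus $A'=\operatorname{Sing}(A)$ is an analytic subset of $M$ of codimension $\geq 2$ and of strictly smaller dimension, while $A\setminus A'$ is a \emph{closed} complex submanifold of the manifold $M\setminus A'$ of real codimension $\geq 4$; ordinary transversality to a single closed submanifold shows that $\pi_1(M\setminus A,x)\to\pi_1(M\setminus A',x)$ is an isomorphism, the inductive hypothesis shows that $\pi_1(M\setminus A',x)\to\pi_1(M,x)$ is an isomorphism, and the recursion bottoms out when the iterated singular locus becomes empty.
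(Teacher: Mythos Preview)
Your proposal is correct, and your inductive alternative \emph{is} the paper's proof: the paper quotes the smooth case (a closed $C^\infty$ submanifold of real codimension $\geq 3$, citing Godbillon) and then takes a finite stratification $A=A_1\sqcup\cdots\sqcup A_n$ with each $B_k=A_1\cup\cdots\cup A_k$ analytic in $M$ and each $A_k$ a closed submanifold of $M\setminus B_{k-1}$, chaining the isomorphisms $\pi_1(M\setminus B_k)\cong\pi_1(M\setminus B_{k-1})$; your recursion on $\dim_\C A$ via $A'=\operatorname{Sing}(A)$ is this same chain read top-down rather than bottom-up. Your primary route---perturbing a map of a compact $P$ with $\dim P\leq 2$ to be simultaneously transverse to all strata---is a legitimate shortcut, but as you rightly flag it leans on Whitney regularity of the canonical stratification to ensure that transversality to all strata can be achieved by a single perturbation. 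The paper's chain argument (equivalently, your induction) sidesteps this by only ever invoking transversality to one closed submanifold at a time, trading the stratified-transversality machinery for a harmless finite iteration.
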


\begin{proof}

We begin by recalling the following fact from Differential Topology: {\em Let $X$ be a connected $\mathcal{C}^\infty$ differentiable manifold without boundary, $Y\subset M$ be closed submanifold, $x$ a point in $X\setminus Y$, and $i:X\setminus Y\to X$ the inclusion map. Then  if the codimension of $Y$ is at least 3, then the group homomorphism 
    \begin{equation*}
        i_*:\pi_1(X\setminus Y,x)\to\pi_1(X,x)
    \end{equation*}
    is an isomorphism.
    }

For a proof, see \cite[Théorème~2.3, page 146]{godbillon}. Essentially, this is a reflection of the fact that thanks to the low
codimension of $Y$, by a standard transversality argument, there is no problem in homotopically deforming a loop based at $x$ to 
a loop based at $x$ and not intersecting $Y$, and further,  given two loops based at $x$ homotopic in $X$, there is no problem in homotopically deforming them to each other in $X\setminus Y$. 

Now, since $A$ is an analytic subset of a complex manifold $M$, there is a stratification of $A$ by local analytic subsets (see \cite{chirka} for details). More precisely, there exist  pairwise disjoint local analytic subsets $A_j$ of $A$ such that
\begin{equation*}
    A=\bigcup_{i=1}^n A_i,
\end{equation*}
where the set $B_k=\bigcup_{i=1}^k A_i$ is an analytic subset of $M$ and $A_k$ is a closed submanifold of $M\setminus B_{k-1}$ for each $1\leq k\leq n$. Note that, since $A$ has codimension at least $2$ in $M$, then $A_k$ also has codimension at least $2$ in $M\setminus B_{k-1}$ for each $1\leq k\leq n$. Thus, assuming $A\neq M$,
\begin{equation*}
    M_k=M\setminus B_k
\end{equation*}
is a open submanifold of $M$ for each $1\leq k\leq n$. Hence, since
\begin{equation*}
    M_k=M_{k-1}\setminus A_k,
\end{equation*}
each inclusion in the following chain
\begin{center}
\begin{tikzpicture}[scale=1]
    \node at (-1,0) (1) {$M\setminus A=M_n$};
    \node at (2,0) (2) {$M_{n-1}$};
    \node at (4,0) (3) {$\dots$};
    \node at (6,0) (4) {$M_1$};
    \node at (8.5,0) (5) {$M_0=M$};
    \draw[->] (1) -- (2);
    \draw[->] (2) -- (3);
    \draw[->] (3) -- (4);
    \draw[->] (4) -- (5);
    \node[label={north:$i$}] at (.75,0) {};
    \node[label={north:$i$}] at (3,0) {};
    \node[label={north:$i$}] at (5,0) {};
    \node[label={north:$i$}] at (7,0) {};
\end{tikzpicture}
\end{center}
is an isomorphism of groups by the fact from Differential Topology quoted in the first paragraph, since we may take $X=M_k$ and $Y=A_k$ and the conditions are satisfied.   The conclusion follows.
\end{proof}
\subsection{Branching behavior of $\pi$}
Since $\pi$ is proper holomorphic map of equidimensional complex analytic sets, it follows that $\pi$ must be a covering map when the analytic sets over which it is branched are removed from the source and the target. However, given the 
elementary nature of the considerations here, one can be much more explicit in this special case.  
Let $m_1,\dots,m_k$ be a partition of $m$, i.e., $m_j$ be positive integers such that $\sum_{j=1}^k m_j=m$. 
We denote  by 
\begin{equation}\label{eq-whitney}
 \langle x^1:m_1, x^2:m_2, \dots, x^k:m_k\rangle   
\end{equation}
the element of $(\B_s)^m_{\rm Sym}$ in which $x^j$ is repeated $m_j$ times.  Let $V(m_1,\dots, m_k)$ be the set of points  $\alpha$ in ${\B_s}^m_{\rm Sym}$ such that there are {\em distinct} $x^1,\dots,x^k\in {\B_s}$ such that 
\begin{equation*}
    \alpha= \langle x^1:m_1, \dots, x^k: m_k\rangle,
\end{equation*}
where we use the notation \eqref{eq-whitney}, that is $x^j$ is repeated $m_j$ times. Also let 
\begin{equation*}
    \wt{V}(m_1,\dots, m_k)= \pi^{-1}(V(m_1,\dots,m_k)) \subset {\B_s}^m.
\end{equation*}
\begin{prop} \label{prop-covering} The restricted map
\[ \pi: \wt{V}(m_1,\dots, m_k) \to V(m_1,\dots,m_k)\]
is a holomorphic  covering map of degree 
\[ \frac{m!}{m_1!m_2!\cdots m_k!}.\]
\end{prop}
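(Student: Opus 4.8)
The plan is to show directly that every point $\alpha \in V(m_1,\dots,m_k)$ has an evenly covered neighborhood. First I would observe that by hypothesis $\alpha = \langle x^1:m_1,\dots,x^k:m_k\rangle$ with $x^1,\dots,x^k \in \B_s$ distinct. Since the $x^j$ are distinct points of the Hausdorff space $\B_s$, I can choose pairwise disjoint open neighborhoods $U_j \ni x^j$ in $\B_s$, small enough that $\overline{U_j}$ remain pairwise disjoint. Consider the open set $W = U_1^{m_1} \times U_2^{m_2} \times \dots \times U_k^{m_k} \subset (\B_s)^m$, and let $N = \pi(W) \cap V(m_1,\dots,m_k)$, an open neighborhood of $\alpha$ in $V(m_1,\dots,m_k)$ (using that $\pi$ is open, being a quotient by a finite group of homeomorphisms). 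The key point is that a point $\beta \in N$ is an unordered $m$-tuple that, because the $U_j$ are disjoint, splits \emph{uniquely} into $m_j$ points lying in $U_j$ for each $j$; in other words $\beta$ lies in $V(m_1,\dots,m_k)$ with its $k$-part pattern "aligned" to the blocks $U_1,\dots,U_k$.

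Next I would identify the fiber $\pi^{-1}(\beta) \cap W$. An ordered preimage of $\beta$ in $W$ is an assignment of the $m$ underlying points (counted with multiplicity) to the $m$ coordinate slots, respecting the constraint that the $m_j$ points in $U_j$ go to the slots indexed by the $j$-th block of size $m_j$. Since within each block the $m_j$ points are equal (the point $x^j(\beta)$ repeated), permuting slots inside a block gives the same ordered tuple; and permuting across blocks is forbidden because the $U_j$ are disjoint. Hence the number of ordered preimages in $W$ equals the number of ways to choose which $m_j$ of the $m$ slots form block $j$, namely the multinomial coefficient $\dfrac{m!}{m_1! m_2! \cdots m_k!}$ — and this count is independent of $\beta$. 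Concretely, $\pi^{-1}(N) \cap W$ is a disjoint union of $\dfrac{m!}{m_1!\cdots m_k!}$ open sets, each a "relabeled block arrangement" of $W$, and $\pi$ restricted to each is a homeomorphism onto $N$ (it is continuous, open, and bijective, being the restriction of $\pi$ to a set on which the $S_m$-action has trivial stabilizer modulo the within-block symmetry already quotiented).

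The remaining step is to check that $\pi^{-1}(N) = \pi^{-1}(N)\cap W$, i.e., that $W$ already captures the \emph{entire} preimage of $N$ inside $\wt{V}(m_1,\dots,m_k)$: if an ordered tuple maps to some $\beta \in N$, its coordinates are exactly the $m$ points of $\beta$ with multiplicity, each of which lies in $\bigcup_j U_j$, and the disjointness forces exactly $m_j$ of them into $U_j$, so the tuple lies in some permutation of $W$ — precisely one of the $\frac{m!}{m_1!\cdots m_k!}$ sheets described above. (Here I should shrink the $U_j$ slightly, or equivalently intersect with $\wt V$, to rule out coordinates landing on the boundary $\partial U_j$; working with the open cover is harmless.) Finally, holomorphicity of each local inverse is automatic: $\pi$ is a local biholomorphism away from the ramification locus, and $\wt V(m_1,\dots,m_k)$ is contained in the complement of that locus (no coordinate collisions occur \emph{across} distinct $x^j$, and collisions \emph{within} a block are already built into the definition of the symmetric power at those points). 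Assembling these observations gives that $\pi: \wt V(m_1,\dots,m_k) \to V(m_1,\dots,m_k)$ is a holomorphic covering of the stated degree.

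The main obstacle, such as it is, is purely bookkeeping: making the "unique splitting into blocks" argument airtight — i.e., verifying carefully that for $\beta$ near $\alpha$ the decomposition into the $U_j$-pieces is forced and unique, and that the within-block identifications account for exactly the over-count, so that the naive multinomial is the honest sheet number. There is no deep topology here precisely because, as the text notes, the elementary structure of $\pi$ lets us sidestep the general branched-covering machinery; the one subtlety worth stating explicitly is why $\wt V(m_1,\dots,m_k)$ avoids the ramification locus of $\pi$, which is exactly the assertion that distinct-$x^j$ configurations are unramified points of the symmetrization map.
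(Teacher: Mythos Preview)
Your approach is essentially the paper's: pick disjoint neighborhoods $U_j \ni x^j$, set $W = U_1^{m_1}\times\cdots\times U_k^{m_k}$, take $N=\pi(W)\cap V(m_1,\dots,m_k)$ (this is exactly the paper's $U_\alpha$), and observe that the $S_m$-translates $\sigma\cdot W$ yield $\dfrac{m!}{m_1!\cdots m_k!}$ disjoint sheets on each of which $\pi$ is a homeomorphism onto $N$.

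Two slips to clean up in your write-up. First, where you write ``$\pi^{-1}(N)\cap W$ is a disjoint union of $\frac{m!}{m_1!\cdots m_k!}$ open sets'' and later ``$\pi^{-1}(N)=\pi^{-1}(N)\cap W$'', you mean $\pi^{-1}(N)$ itself (equivalently $\pi^{-1}(N)\cap\bigcup_\sigma \sigma W$): the set $W$ is only \emph{one} of the sheets, and $\pi^{-1}(\beta)\cap W$ is a single point for each $\beta\in N$. Second, the assertion that $\wt V(m_1,\dots,m_k)$ lies in the complement of the ramification locus of the full map $\pi:(\B_s)^m\to(\B_s)^m_{\rm Sym}$ is false whenever some $m_j>1$ (coordinates do coincide within a block). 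Holomorphicity of the restricted covering does not come from that; it comes simply because $\pi$ is holomorphic and, restricted to each sheet $\sigma W\cap\wt V$, is a bijection onto $N$ --- the paper sidesteps this by noting only that $\pi$ is a quotient map, hence each such restriction is a homeomorphism.
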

\begin{proof}
Let $\alpha\in V(m_1,\dots,m_k)$. We will show that there is an open set $U_\alpha\subset V(m_1,\dots,m_k)$, containing $\alpha$, such that $\pi^{-1}(U_\alpha)$ is a disjoint union of open subsets $\wt{U}_\alpha^i\subset\wt{V}(m_1,\dots,m_k)$, with the restriction of $\pi$ to each $\wt{U}_\alpha^i$ a homeomorphism onto its image $\pi(\wt{U}_\alpha^i)$.

We can write $\alpha= \langle x^1:m_1, \dots, x^k: m_k\rangle$ for some distinct $x^1,\dots,x^k\in {\B_s}$. Since ${\B_s}$ is Hausdorff, there exist disjoint open subsets $U_i\subset {\B_s}$ with $x^i\in U_j$ if and only if $i=j$. Now, let 
\begin{equation*}
    U_\alpha=\{\langle \alpha^1:m_1,\dots,\alpha^k:m_k\rangle\in V(m_1,\dots,m_k)\mid \alpha^i\in U_i\}.
\end{equation*}    
Note that $U_\alpha$ is open in in the subspace topology defined on $V(m_1,\dots,m_k)$, since
\begin{equation*}
    U_\alpha=\pi(U_1^{m_1}\times\cdots\times U_k^{m_k})\cap V(m_1,\dots,m_k),
\end{equation*}    
where $U_i^{m_i}$ denotes the $m_i$-fold Cartesian power of $U_i$. Let $\sigma(y^1,\dots,y^m)$ denote $(y^{\sigma(1)},\dots,y^{\sigma(m)})$ for $\sigma\in S_m$ and $(y^1,\dots,y^m)\in {\B_s}^m$. Then, $\pi^{-1}(\langle y^1,\dots,y^m\rangle)=\{\sigma(y^1,\dots,y^m)\mid \sigma\in S_m\}$, and we have
\begin{align*}
    \pi^{-1}(U_\alpha)&=\bigcup_{\sigma\in S_m}\sigma\left((U_1^{m_1}\times\cdots\times U_k^{m_k})\cap\wt{V}(m_1,\dots,m_k)\right) \\
    &=\bigcup_{\sigma\in S_m}\left(\sigma(U_1^{m_1}\times\cdots\times U_k^{m_k})\cap\wt{V}(m_1,\dots,m_k)\right) \\
    &=\left(\bigcup_{\sigma\in S_m}\sigma(U_1^{m_1}\times\cdots\times U_k^{m_k})\right)\cap\wt{V}(m_1,\dots,m_k).
\end{align*}
Since the sets $\sigma(U_1^{m_1}\times\cdots\times U_k^{m_k})$ are open in ${\B_s}^m$, it follows that $\pi^{-1}(U_\alpha)$ is open in $\wt{V}(m_a,\dots,m_k)$ with the subspace topology. Also, since $U_i\cap U_j=\emptyset$ for $i\neq j$, the sets $\sigma(U_1^{m_1}\times\cdots\times U_k^{m_k})$ and $\tau(U_1^{m_1}\times\cdots\times U_k^{m_k})$ are either identical or disjoint for each $\sigma,\tau\in S_m$, and the number of distinct sets $\sigma(U_1^{m_1}\times\cdots\times U_k^{m_k})$ is equal to the number of distinct preimages $\pi^{-1}(\langle \alpha^1:m_1,\dots,\alpha^k:m_k\rangle)$, which is exactly
\begin{equation*}
    \frac{m!}{m_1!m_2!\cdots m_k!}.
\end{equation*}

Now, the restricted map $\pi:\sigma(U_1^{m_1}\times\cdots\times U_k^{m_k}) \to \pi(U_1^{m_1}\times\cdots\times U_k^{m_k})$ is one-to-one, and hence a homeomorphism, as $\pi$ is a quotient map. Thus, by restricting $\pi$ to the subspace $\sigma(U_1^{m_1}\times\cdots\times U_k^{m_k})\cap\wt{V}(m_1,\dots,m_k)$, we have $\pi:\sigma(U_1^{m_1}\times\cdots\times U_k^{m_k})\cap\wt{V}(m_1,\dots,m_k)\to U_\alpha$ is a homeomorphism as well.
\end{proof}
\begin{prop}
\label{analyticdimension}
Let $X$ and $Y$ be analytic subsets of $\Omega_1\subset\C^n$ and $\Omega_2\subset\C^m$, respectively, and let  $f:X\to \Omega_2$ be a proper holomorphic map such that $f(X)\subset Y$. Then,
\begin{enumerate}
    \item If $f(X)=Y$, then $\dim X= \dim Y$.
    \item If $Y$ is irreducible and $\dim X= \dim Y$, then $f(X)=Y$.
\end{enumerate}
\end{prop}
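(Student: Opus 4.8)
The plan is to reduce both statements to three standard facts about analytic sets: Remmert's proper mapping theorem, the finiteness of compact analytic subsets of a domain in $\C^n$, and the way dimension behaves under proper surjective maps with finite fibres, combined with the rigidity of irreducible analytic sets. None of the steps requires anything beyond what can be quoted from \cite{chirka} or \cite{gunros}; the work is in assembling them in the right order and keeping track of what ``$\dim$'' means.

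First I would observe that, since $f$ is proper and holomorphic and $X$ is an analytic subset of $\Omega_1$, Remmert's proper mapping theorem shows that $f(X)$ is an analytic subset of $\Omega_2$; because $f(X)\subset Y$ and $Y$ is itself analytic, $f(X)$ is in fact an analytic subset of $Y$. Next I would show that $f$ has finite fibres. For $y\in\Omega_2$ the fibre $f^{-1}(y)$ is a closed analytic subset of $X$, hence a closed analytic subset of $\Omega_1$, and it is compact because $f$ is proper; but a compact analytic subset of an open subset of $\C^n$ is necessarily finite, since on each (necessarily compact) irreducible component the restrictions of the coordinate functions of $\C^n$ are holomorphic and so constant by the maximum principle, forcing that component to be a single point. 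Consequently $f\colon X\to f(X)$ is a proper, surjective holomorphic map with finite fibres, since $f(X)$ is closed in $\Omega_2$ and the preimage of a compact subset of $f(X)$ is a compact subset of $X$.

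From here part (1) follows quickly. On the one hand, $\dim f(X)\le\dim X$ always, because the generic rank of a holomorphic map restricted to an irreducible component of $X$ is at most the dimension of that component. On the other hand, the finiteness of the fibres together with the fibre‑dimension inequality $\dim_x X\le\dim_{f(x)}f(X)+\dim_x f^{-1}(f(x))$, applied at a point $x$ lying on a component of $X$ of maximal dimension, gives $\dim X\le\dim f(X)$. Hence $\dim X=\dim f(X)$, and if in addition $f(X)=Y$ we conclude $\dim X=\dim Y$.

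For part (2), the same argument already yields $\dim f(X)=\dim X=\dim Y$ with no extra hypothesis. Now $f(X)$ is an analytic subset of the irreducible analytic set $Y$, and a proper analytic subset of an irreducible analytic set has strictly smaller dimension; therefore the equality $\dim f(X)=\dim Y$ is incompatible with $f(X)\subsetneq Y$, and so $f(X)=Y$. The only point that needs care is the bookkeeping in the two dimension inequalities of paragraph~three — making sure they are applied component by component on $X$ and that ``$\dim$'' consistently denotes the supremum of the local dimensions — but there is no essential obstacle here.
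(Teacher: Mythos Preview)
Your proposal is correct and follows essentially the same route as the paper: Remmert's theorem gives that $f(X)$ is analytic, properness forces the fibres to be finite (you argue via the maximum principle, the paper via the Rank Theorem, but these are two sides of the same coin), whence $\dim f(X)=\dim X$, and then irreducibility of $Y$ finishes part~(2). The only cosmetic difference is in part~(2): you invoke the fact that a proper analytic subset of an irreducible set has strictly smaller dimension, while the paper writes $Y=f(X)\cup\overline{Y\setminus f(X)}$ and observes this would make $Y$ reducible---but these are equivalent formulations of the same standard fact.
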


\begin{proof}
By Remmert's Theorem, $f(X)$ is an analytic subset of $\Omega_2$, with $\dim f(X)=\dim f$. Since $f$ is proper, we cannot have $\dim f<\dim X$. Otherwise, the Rank Theorem  would imply that for some $y\in f(X)$, $f^{-1}(y)$ is a compact analytic subset of $X$ with positive dimension, which is impossible. Hence, we conclude that $\dim f(X)=\dim X$.

Suppose first $f(X)=Y$. Then, evidently, $\dim X=\dim f(X)=\dim Y$, establishing (1).

 Now suppose $\dim X = \dim Y$ and suppose that $f(X)\neq Y$. Then, by well-known properties of analytic sets, $\overline{Y\setminus f(X)}$ is an analytic set, and since $Y$ is closed in $\Omega_2$, $\overline{Y\setminus f(X)}$ is contained in $Y$. Additionally, since $\dim f(X)=\dim Y$, $\overline{Y\setminus f(X)}$ cannot be all of $Y$. Now, since $Y=f(X)\cup \overline{Y\setminus f(X)}$, $Y$ is reducible. Hence, if $Y$ is irreducible and $\dim X = \dim Y$, then $f(X)=Y$, completing the proof of (2).

\end{proof}

We now prove the following lemma:

\begin{lem}
\label{codimension}
Let $A=\{(z^1,\dots, z^m)\in (\B_s)^m: z^i=z^j \text{ for some $i\neq j$}\}$. 
Then  $A$ is an analytic subset of $(\B_s)^m$ of codimension $s$ and $\pi(A)$ is an analytic subset of $(\B_s)^m_{\rm Sym}$ of codimension $s$. The restricted map
\[ \pi: (\B_s)^m \setminus A \to (\B_s)^m_{\rm Sym} \setminus \pi(A) \]
is a holomorphic covering map of complex manifolds. 
\end{lem}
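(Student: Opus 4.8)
The plan is to prove the three assertions in turn, leaning on the structural results already in place.

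\textbf{Analyticity and codimension of $A$.} I would write $A=\bigcup_{1\le i<j\le m}\Delta_{ij}$, where $\Delta_{ij}=\{(z^1,\dots,z^m)\in(\B_s)^m:z^i=z^j\}$ is the common zero locus of the $s$ holomorphic functions $z^i_1-z^j_1,\dots,z^i_s-z^j_s$; these have everywhere independent differentials, so each $\Delta_{ij}$ is a closed complex submanifold of the $ms$-dimensional manifold $(\B_s)^m$ of codimension $s$. Since a finite union of analytic subsets is analytic, with codimension equal to the minimum of the codimensions of its pieces, $A$ is analytic of codimension $s$, hence $\dim A=ms-s$.

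\textbf{Analyticity and codimension of $\pi(A)$.} Because $A$ is closed in $(\B_s)^m$ and $\pi$ is proper, the restriction $\pi|_A$ is a proper holomorphic map, so Remmert's theorem (as in the proof of Proposition~\ref{analyticdimension}) shows $\pi(A)$ is an analytic subset of $(\B_s)^m_{\rm Sym}$. For its codimension I would apply Proposition~\ref{analyticdimension}(1) twice: to $\pi|_A$, whose image is $\pi(A)$, to get $\dim\pi(A)=\dim A=ms-s$; and to the proper surjection $\pi:(\B_s)^m\to(\B_s)^m_{\rm Sym}$ to get $\dim(\B_s)^m_{\rm Sym}=ms$. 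Subtracting gives $\codim\pi(A)=s$.

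\textbf{The covering property.} Here the point is to recognize the complements as strata already analyzed: for the partition $m=1+\cdots+1$ into $m$ ones, $V(1,\dots,1)$ is exactly the set of unordered tuples with pairwise distinct entries, so $V(1,\dots,1)=(\B_s)^m_{\rm Sym}\setminus\pi(A)$ and $\wt V(1,\dots,1)=\pi^{-1}(V(1,\dots,1))=(\B_s)^m\setminus A$. Proposition~\ref{prop-covering} then gives immediately that $\pi$ restricts to a holomorphic covering map of degree $m!$ between these sets. That both are complex manifolds: the source is open in $(\B_s)^m$, and the base is a manifold because $S_m$ acts on $(\B_s)^m\setminus A$ freely (a permutation fixing a tuple with distinct entries is the identity), hence---being finite---properly discontinuously, by biholomorphisms, so the quotient carries a complex-manifold structure for which $\pi$ is a holomorphic covering.

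I do not expect a genuine obstacle: essentially all the work has been absorbed into Propositions~\ref{analyticdimension} and~\ref{prop-covering}. The only steps that need a little care are the dimension accounting in the second part---in particular deriving $\dim(\B_s)^m_{\rm Sym}=ms$ rather than assuming it---and, in the third part, upgrading the topological covering of Proposition~\ref{prop-covering} to a covering ``of complex manifolds'' via freeness of the $S_m$-action.
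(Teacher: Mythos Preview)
Your proposal is correct and follows essentially the same route as the paper: decompose $A$ as a finite union of the codimension-$s$ diagonals, use properness of $\pi$ and Remmert to get analyticity of $\pi(A)$, compare dimensions via proper surjectivity, and identify the complements with the top stratum $V(1,\dots,1)$, $\wt V(1,\dots,1)$ so that Proposition~\ref{prop-covering} supplies the covering. The only differences are cosmetic: you invoke Proposition~\ref{analyticdimension} explicitly (the paper argues the dimension equality directly), and you include the covering-map argument and the free-action justification for the manifold structure of the base inside the lemma, whereas the paper's written proof stops after the codimension computation and defers those observations to the surrounding text and the proof of Theorem~\ref{carttosym}.
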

\begin{proof}
Let $A_{ij}$ be the linear subspace of $(\C^s)^m \cong \C^{sm}$ given by
\begin{equation*}
    A_{ij}= \left\{(z^1,\dots, z^m)| z^i=z^j\right\}.
\end{equation*}
Then $A_{ij}$ is defined by the vanishing of $s$ linearly independent linear functionals $z\mapsto z^i_k- z^j_k$ where $1\leq k\leq s$, and 
consequently $A_{ij}$ is of codimension $s$ in $(\C^s)^m$.  Since
\begin{equation*}
    A = \bigcup_{i<j} \left(A_{ij}\cap (\B_s)^m\right),
\end{equation*}
it now follows that  $A$ is an analytic subset  of codimension $s$ in $(\B^s)^m$. Since the finite-to-one  quotient map $\pi:(\B_s)^m\to(\B_s)^m_{\rm Sym}$ is proper and holomorphic, by Remmert's Theorem, $\pi(A)$ is an analytic subset of $(\B_s)^m_{\rm Sym}$. Since $\pi^{-1}(\pi(A))=A$, it follows that $\pi|_A$ is a proper map $A\to\pi(A)$, and since $\pi$ is also surjective, it is easy to see that we must have $\dim A=\dim \pi(A)$. Since $\dim ((\B_s)^m)=\dim ((\B_s)^m_{\rm Sym})$, it follows that $\pi(A)$ has the same codimension in $(\B_s)^m_{\rm Sym}$ as $A$ has in $(\B_s)^m$, which is $s$.
\end{proof}

Recall that, for a partition $m_1,\dots,m_k$ of $m$, $V(m_1,\dots, m_k)$ is the set of points $\alpha$ in $(\B_s)^m_{\rm Sym}$ such that there are distinct $z^1,\dots,z^k\in \B_s$ such that 
\begin{equation*}
    \alpha= \langle z^1:m_1, \dots, z^k: m_k\rangle,
\end{equation*}
where this notation is as in \eqref{eq-whitney}, i.e., $z^j$ is repeated $m_j$ times.  Let us also set
\begin{equation*} 
    \wt{V}(m_1,\dots, m_k)= \pi^{-1}(V(m_1,\dots,m_k)) \subset (\B_s)^m.
\end{equation*}
Then, $(\B^s)^m\setminus A$ is precisely the set $\wt{V}(m_1,\dots,m_k)$ and $(\B^s)^m_{\rm Sym}\setminus \pi(A)$ is precisely the set $V(m_1,\dots,m_k)$.

We are now ready to prove Theorem \ref{carttosym}.

\begin{proof}[Proof of Theorem~\ref{carttosym}]
Let $f:(\B_s)^m\to(\B_s)^m_{\rm{Sym}}$ be a proper holomorphic map and let $A$ be as defined in Lemma \ref{codimension}. Since $\pi(A)$ is an analytic subset in $(\B_s)^m_{\rm{Sym}}$, $(\B_s)^m_{\rm{Sym}}\setminus\pi(A)$ is an open, connected set.

Since $(\B_s)^m\setminus A=\wt{V}(1,1,\dots,1)$ and $(\B_s)^m_{\rm{Sym}}\setminus\pi(A)=V(1,1,\dots,1)$, by Proposition \ref{prop-covering}, $\pi|_{(\B_s)^m\setminus A}$ is a holomorphic covering $(\B_s)^m\setminus A\to(\B_s)^m_{\rm{Sym}}\setminus\pi(A)$. Since a holomorphic covering is a local biholomorphism, $(\B_s)^m_{\rm{Sym}}\setminus\pi(A)$ is an $sm$-dimensional complex manifold. Moreover, since $\pi(A)$ is an analytic subset of $(\B_s)^m_{\rm{Sym}}$, we have that $(\B_s)^m_{\rm{Sym}}\setminus\pi(A)$ is connected and dense in $(\B_s)^m_{\rm Sym}$. Since $\operatorname{reg} ((\B_s)^m_{\rm Sym})$ is an open subset of $(\B_s)^m_{\rm Sym}$ containing $(\B_s)^m_{\rm{Sym}}\setminus\pi(A)$, $\operatorname{reg} ((\B_s)^m_{\rm Sym})$ must be connected, and hence  $(\B_s)^m_{\rm Sym}$ is an irreducible analytic set, with $\dim ((\B_s)^m_{\rm Sym})=\dim ((\B_s)^m_{\rm Sym}\setminus\pi(A))$. Since $f$ is a proper holomorphic map from $(\B_s)^m$, which is a manifold of dimension $sm$, to $(\B_s)^m_{\rm Sym}$, which is an irreducible analytic set of dimension $sm$, by Theorem \ref{analyticdimension}, $f$ is surjective.

Clearly $f|_{f^{-1}(\pi(A))}$ is a proper holomorphic map from $f^{-1}(\pi(A))$, which is an analytic subset of $(\B_s)^m$, onto $\pi(A)$, and so by Proposition \ref{analyticdimension}, $\dim(f^{-1}(\pi(A))=\dim(\pi(A))$. Since we also have $\dim((\B_s)^m)=\dim((\B_s)^m_{\rm{Sym}})=sm$, and we know from Lemma \ref{codimension} that $\pi(A)$ has codimension at least $s$, $f^{-1}(\pi(A))$ must have codimension at least $s$ in $(\B_s)^m$.

Let $U=(\B_s)^m\setminus f^{-1}(\pi(A))$. Since $A$ and $f^{-1}(\pi(A))$ have complex codimension at least $s\geq 2$, by Proposition \ref{simplyconnected2}, both $(\B_s)^m\setminus A$ and $U$ are simply-connected. Hence, $f|_U$ has a holomorphic lift $\wt{f}|_U:U\to(\B_s)^m\setminus A$, where $f|_U=\pi\circ \wt{f}|_U$. Since $\wt{f}|_U$ is bounded on $U$ and $f^{-1}(\pi(A))$ is an analytic set, by Riemann's Continuation Theorem  $\wt{f}|_U$ extends to a holomorphic function $\wt{f}:(\B_s)^m\to(\B_s)^m$ with $f=\pi\circ \wt{f}$.

\begin{center}
\begin{tikzpicture}[scale=1]
    \node at (0,0) (left) {$U$};
    \node at (4,0) (right) {$(\B_s)^m_{\rm{Sym}}\setminus\pi(A)$};
    \node at (4,2) (top) {$(\B_s)^m\setminus A$};
    \draw[->] (left) -- (right);
    \draw[->] (top) -- (right);
    \draw[->] (left) -- (top);
    \node[label={south:$f|_U$}] at (2,0) {};
    \node[label={east:$\pi$}] at (4,1) {};
    \node[label={[label distance=-5]north west:$\wt{f}|_U$}] at (2,1) {};

    \node at (9,0) (left2) {$(\B_s)^m$};
    \node at (12,0) (right2) {$(\B_s)^m_{\rm{Sym}}$};
    \node at (12,2) (top2) {$(\B_s)^m$};
    \draw[->] (left2) -- (right2);
    \draw[->] (top2) -- (right2);
    \draw[->] (left2) -- (top2);
    \node[label={south:$f$}] at (10.5,0) {};
    \node[label={east:$\pi$}] at (12,1) {};
    \node[label={[label distance=-5]north west:$\wt{f}$}] at (10.5,1) {};
\end{tikzpicture}
\end{center}

It remains to show that $\wt{f}:(\B_s)^m\to (\B_s)^m$ is a proper map. Note that $f=\pi\circ\wt{f}$ is a proper map,
and  $f$ is proper.   If $\wt{f}$ were not proper, one could find a sequence $\{z_n\}_{n=1}^\infty$ with no limit points in $(\B_s)^m$ such that  $\wt{f}(z_n)\to w \in (\B_s)^m$. But this composing with $\pi$, we see that $f$ is not proper, which is a contradiction.  Therefore $\wt{f}$ is proper.
\end{proof}

\section{Proof of Theorem~\ref{thm-main}}
Let $f:(\B_s)^m_{\rm{Sym}}\to(\B_s)^m_{\rm{Sym}}$ be a proper holomorphic map. Then, since $\pi$ is proper and holomorphic, $h=f\circ\pi$ is a proper holomorphic map $(\B_s)^m\to(\B_s)^m_{\rm{Sym}}$. By Theorem \ref{carttosym}, $h$ lifts to a proper holomorphic map $\wt{h}:(\B_s)^m\to(\B_s)^m$ with $h=\pi\circ\wt{h}$. By a classical application of the methods of Remmert and Stein (see \cite[page 76]{narasimhan}), we conclude that 
there exist proper holomorphic self-maps of the ball $\B_s$, $\wt{h}_i$ for $i=1,\dots, m$ and a permutation $\sigma$ of 
 $\{1,2,\dots,m\}$ such that 
$\wt{h}$ has the structure \[ \wt{h}(\tau^1,\tau^2,\dots,\tau^m)=\left(\wt{h}_1(\tau^{\sigma(1)}),\wt{h}_2(\tau^{\sigma(2)}),\dots,\wt{h}_m(\tau^{\sigma(m)})\right).\] 
We get the following commutative diagram:

\begin{center}
\begin{tikzpicture}[scale=1]
    \node at (0,0) (bl) {$(\B_s)^m_{\rm{Sym}}$};
    \node at (3,0) (br) {$(\B_s)^m_{\rm{Sym}}$};
    \node at (0,2) (tl) {$(\B_s)^m$};
    \node at (3,2) (tr) {$(\B_s)^m$};
    \draw[->] (tl) -- (tr);
    \draw[->] (bl) -- (br);
    \draw[->] (tl) -- (br);
    \draw[->] (tl) -- (bl);
    \draw[->] (tr) -- (br);
    \node[label={south:$f$}] at (1.5,0) {};
    \node[label={east:$\pi$}] at (3,1) {};
    \node[label={west:$\pi$}] at (0,1) {};
    \node[label={[label distance=-5]south west:$h$}] at (1.5,1) {};
    \node[label={north:$\wt{h}$}] at (1.5,2) {};
\end{tikzpicture}
\end{center}

Since $f\circ\pi=\pi\circ\wt{h}$, and the left-hand side is invariant under the action of $S_m$ on $(\tau^1,\tau^2,\dots,\tau^m)$, we must have
\begin{equation}
\label{symprod}
    \langle\wt{h}_1(\tau^{\sigma(1)}),\wt{h}_2(\tau^{\sigma(2)}),\dots,\wt{h}_m(\tau^{\sigma(m)})\rangle=\langle\wt{h}_1(\tau^1),\wt{h}_2(\tau^2),\dots,\wt{h}_m(\tau^m)\rangle
\end{equation}
for every $(\tau^1,\tau^2,\dots,\tau^m)\in(\B_s)^m$ and every $\sigma\in S_m$. Such a relation cannot hold unless
there is a self map $g$ of $\B_s$ such that for each $j=1,\dots, m$, we have
\[ \wt{h}_j =g,\]
 since otherwise we could choose a $\sigma$ for which the two sides would be distinct.  
 Now, since $f\circ\pi=h$, we have
\begin{align*}
    f(\langle\tau^1,\tau^2,\dots,\tau^m\rangle)&=\langle g(\tau^1),g(\tau^2),\dots,g(\tau^m)\rangle \\
    &=g^m_{\rm{Sym}}(\langle\tau^1,\tau^2,\dots,\tau^m\rangle)
\end{align*}
Since $f$ is proper and $f= g^m_{\rm{Sym}}$, it follows that $g$ is a proper holomorphic self-map of the ball $\B_s$. 
Thanks to a classical result of Alexander (see \cite{alexander}, and also \cite[page 316]{rudin}), for $s\geq 2$, the only proper holomorphic self-mappings of $\B_s$ are the automorphisms of $\B_s$, and it is known that the automorphisms are given by certain multi-dimensional fractional linear maps (see \cite{rudin}).  Hence, $g$ is an automorphism of $\B_s$ (of the form \eqref{autB} and the proof is complete.


\end{document}